\theoremstyle{thmstyleone}%
\newtheorem{thm}{Theorem}%
\newtheorem{prp}[thm]{Proposition}%
\newtheorem{crl}[thm]{Corollary}%
\theoremstyle{thmstyletwo}%
\newtheorem{exm}{Example}%
\theoremstyle{thmstylethree}%
\newtheorem{dfn}{Definition}%
\newcommand{\e}[1]{\begin{equation}#1\end{equation}}
\newcommand{\ald}[1]{\begin{aligned}#1\end{aligned}}
\newcommand{\ite}[1]{\begin{itemize}#1\end{itemize}}
\newcommand{\prn}[1]{\left(#1\right)}
\newcommand{\ang}[1]{\left\langle#1\right\rangle}
\newcommand{\cur}[1]{\left\{#1\right\}}
\newcommand{\squ}[1]{\left[#1\right]}
\def\r{\mathbb{R}}
\def\rn{\mathbb{R}^n}
\begin{document}

\title[Article Title]{Revisiting Invex Functions: Explicit Kernel Constructions and Characterizations}

\author*[1]{\fnm{Akatsuki} \sur{Nishioka}}\email{nishioka.a.2122@m.isct.ac.jp}

\affil*[1]{\orgdiv{Department of Mathematical and Computing Science}, \orgname{School of Computing, Institute of Science Tokyo}, \orgaddress{\street{Ookayama 2-12-1}, \city{Meguro-ku}, \postcode{152-8550}, \state{Tokyo}, \country{Japan}}}

\abstract{
An invex function generalizes a convex function in the sense that every stationary point is a global minimizer. Recently, invex functions and their subclasses have attracted attention in signal processing and machine learning. However, verifying invexity is often difficult because its definition involves an unknown function called a kernel function. This paper studies kernel functions associated with invex functions, which have received relatively limited attention in the literature. In particular, we develop several methods for constructing explicit kernel functions and establish a characterization of pseudoconvexity in terms of kernel functions. These results provide constructive tools for proving invexity of new functions and for clarifying their structural properties. We also present examples of nonsmooth, non-pseudoconvex invex functions arising in signal processing.
}

\keywords{invex function, pseudoconvex function, quasiconvex function, generalized convexity, global optimization}

\pacs[MSC Classification]{90C26, 90C46}

\maketitle

\section{Introduction}

A convex function has the property that every stationary point is a global minimizer. This property extends to a broader class of functions known as invex functions. However, verifying invexity is generally difficult, as its definition involves an unknown kernel function.

Knowledge of a kernel function is important for practical applications. For example, the sum of invex functions is invex if they are invex with a common kernel function, and a constrained optimization problem is invex (every Karush--Kuhn--Tucker point is globally optimal) if the objective and constraint functions are invex with a common kernel function \cite{mishra08}. Moreover, recent optimization algorithms for invex problems utilize kernel functions \cite{barik23}.

Despite their importance, kernel functions for invex functions have received relatively limited attention in the literature, and systematic methods for proving invexity through kernel constructions remain scarce. It has also been pointed out that the theory of invexity is often rather abstract and accompanied by relatively few concrete examples and applications \cite{zalinescu14}. Motivated by applications involving nonsmooth invex functions, this paper studies kernel functions from a constructive viewpoint. Specifically, we develop explicit construction rules for kernel functions and establish a characterization of pseudoconvexity in terms of kernel functions. These results provide a useful framework for analyzing invex functions and some of their subclasses. More broadly, the structure of kernel functions may be used to characterize subclasses and structural properties of invex functions.

\subsection{Related work}

An invex function was introduced in \cite{hanson81} for a differentiable function. It was extended to nonsmooth functions \cite{reiland90} and functions defined on Riemannian manifolds \cite{Pini94}. See the monograph \cite{mishra08} for details. Recently, invex functions and subclasses have attracted attention in machine learning and image processing \cite{barik22,barik23,hinder20,pinilla22,pinilla24}.

There are many subclasses of invex functions. One such example is the class of pseudoconvex functions (see \cite{stancu12}).  The invexity of nonlinear semidefinite programming with relation to pseudoconvex optimization is studied in \cite{nishioka25jogo}. For applications of pseudoconvex optimization in economics, management science, and structural engineering, see \cite{bian18,nishioka23coap,nishioka25nfao,stancu12}. Quasar-convex (star-convex) functions are another subclass of invex functions that have been attracting attention in machine learning \cite{hinder20}.

\subsection{Contributions}

\ite{
\item We clarify the connections between invexity, pseudoconvexity, and quasiconvexity for possibly nonsmooth locally Lipschitz continuous functions, which are important for applications in signal processing. 
\item We establish a characterization of pseudoconvexity in terms of the structure of kernel functions.
\item We develop explicit construction rules for kernel functions of (possibly nonsmooth) invex functions.
\item We present simple and constructive proofs for the invexity of several nonsmooth functions arising in applications.
}

\subsection{Notation and organization}

Throughout the paper, $X\subseteq\rn$ is a nonempty open set and $C\subseteq\rn$ is a nonempty open convex set. $\mathrm{conv}\,X$ is the convex hull of a set $X$. $\|x\|=\|x\|_2\coloneqq\sqrt{\sum_{i=1}^n x_i^2}$ is the $l_2$ norm, $\|x\|_1\coloneqq\sum_{i=1}^n |x_i|$ is the $l_1$ norm, $\ang{x,y}$ is the standard inner product for $x,y\in\rn$.

This paper is organized as follows. Section 2 provides preliminaries on invex functions. Section 3 provides connections of invex functions to pseudoconvex and quasiconvex functions. The characterization theorem for pseudoconvex functions in terms of kernel functions is also provided. Section 4 provides examples of invex functions and systematic methods to construct their kernel functions. Finally, Section 5 provides concluding remarks.

\section{Preliminaries on invex functions}\label{sec_pre}

We summarize the definitions and known results of invex functions. We first consider a smooth setting, and then consider a nonsmooth setting for clarity. We also illustrate why finding kernel functions is important in applications. We present nonsmooth extensions of basic theorems given in \cite{mishra08} and provide their proof in Appendix \ref{a_proof} for the reader’s convenience.

\subsection{Smooth case}

\begin{dfn}[smooth invex functions (e.g., \cite{mishra08})]\label{dfn_invex}
Let $X\subseteq\rn$ be a nonempty open set. A differentiable function $f:X\to\r$ is said to be invex if there exists a vector-valued function (called a kernel function) $\eta:X\times X\to \rn$ such that
\e{\label{invex_smo}
f(y)-f(x)\ge\ang{\nabla f(x),\eta(x,y)},\ \ \forall x,y\in X.
}
\end{dfn}

A first idea of invex functions was introduced by Hanson \cite{hanson81}, and the term ``invex'' was coined by Craven \cite{craven81} as an abbreviation of ``invariant convex'' (see~Section \ref{sec_inv}). Every convex function is invex with a kernel function $\eta(x,y)=y-x$.

The following fundamental theorem characterizes invex functions.

\begin{prp}[{\cite[Theorem 2.2]{mishra08}}]\label{prp_invex}
Let $f:X\to\r$ be differentiable. $f$ is invex if and only if every stationary point of $f$ (a point $x^*\in X$ satisfying $\nabla f(x^*)=0$) is a global minimizer of $f$.
\end{prp}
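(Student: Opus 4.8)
The plan is to prove both implications, with the forward direction being an immediate substitution and the reverse direction requiring an explicit, case-based construction of the kernel. For the ``only if'' direction I would assume $f$ is invex with some kernel $\eta$ and let $x^*$ be any stationary point. Substituting $x=x^*$ into the invexity inequality \eqref{invex_smo} and using $\nabla f(x^*)=0$ yields $f(y)-f(x^*)\ge\ang{0,\eta(x^*,y)}=0$ for every $y\in X$, so $x^*$ is a global minimizer. This is a one-line computation and needs no structural assumption on $\eta$.

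For the ``if'' direction I need to build a kernel $\eta$ from scratch. The key observation is that the invexity inequality constrains $\eta(x,\cdot)$ only through the scalar $\ang{\nabla f(x),\eta(x,y)}$, and this quantity can be set to any prescribed value whenever $\nabla f(x)\ne 0$. I would therefore define $\eta$ by cases. When $\nabla f(x)=0$, the point $x$ is stationary, hence by hypothesis a global minimizer, so $f(y)-f(x)\ge 0$ and the required inequality $f(y)-f(x)\ge\ang{\nabla f(x),\eta(x,y)}=0$ holds for any choice; I would simply take $\eta(x,y)=0$. When $\nabla f(x)\ne 0$, I would set
\[
\eta(x,y)=\frac{f(y)-f(x)}{\|\nabla f(x)\|^2}\,\nabla f(x),
\]
so that $\ang{\nabla f(x),\eta(x,y)}=f(y)-f(x)$ and \eqref{invex_smo} holds with equality. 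Verifying that this $\eta$ witnesses invexity is then direct.

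I do not anticipate a serious obstacle here: the construction is fully explicit and its verification is mechanical. The only conceptual point worth emphasizing is that the definition of invexity imposes no regularity on $\eta$ — it need not be continuous, nor compatible with the geometry of $X$ — which is exactly what legitimizes the pointwise, case-split definition above. The hypothesis that every stationary point is a global minimizer enters at precisely one place, namely the degenerate case $\nabla f(x)=0$, where the inner product is forced to vanish and the inequality can only be salvaged by the global-minimality of $x$.
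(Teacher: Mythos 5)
Your proposal is correct and matches the paper's proof essentially verbatim: the forward direction is the same substitution of $\nabla f(x^*)=0$ into \eqref{invex_smo}, and your case-split kernel coincides exactly with the paper's construction \eqref{eta_smo}.
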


\begin{figure}[t]
    \centering
    \includegraphics[width=5cm]{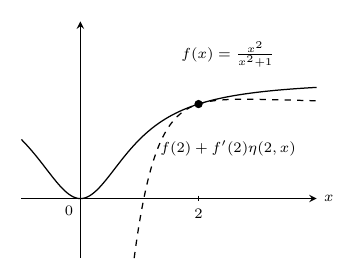}
    \caption{Graphs of an invex function $f(x)=x^2/(x^2+1)$ (solid line) and its tangent curve at $x=2$: $f(2)+f'(2)\eta(2,x)=4/5+4(x-2)/(x^2+1)^2$ (dashed line). The invexity of $f$ and a corresponding kernel function are given by Corollary \ref{crl_frac}.}
    \label{fig_tangent}
\end{figure}

A kernel function $\eta$ is not unique and can be discontinuous. Figure \ref{fig_tangent} illustrates the interpretation of kernel functions. While a convex function $g$ is bounded below by the tangent line $l_x(y)=g(x)+\ang{\nabla g(x),y-x}$ at a point $x$, an invex function $f$ is bounded below by a tangent curve $c_x(y)=f(x)+\ang{\nabla f(x),\eta(x,y)}$ at $x$. Note that we can choose $\eta$ satisfying $\eta(x,x)=0$ (see \eqref{eta_non}). Since the tangent curve always satisfies $c_x(y)\equiv f(x)$ when $x$ is stationary, $x$ must be a global minimizer.

\subsection{Nonsmooth case}

We introduce nonsmooth invex functions using Clarke subdifferentials (called C-invex functions in \cite{mishra08}). There are other generalizations of invex functions to nonsmooth functions \cite{mishra08}, but the following one is common and suitable for our purpose.

\begin{dfn}[Clarke subdifferential and stationarity]\label{dfn_cla}
Let $f:X\to\r$ be locally Lipschitz continuous. The Clarke subdifferential of $f$ at $x\in X$ is defined by
\e{
\partial f(x)=\mathrm{conv}\cur{\xi\in\rn\,\middle|\,\exists \{x^k\}\subseteq D_f\ \mathrm{s.t.}\ \underset{k\to\infty}{\lim}x^k=x,\ \underset{k\to\infty}{\lim}\nabla f(x^k)=\xi},
}
where $D_f\subseteq X$ is the set of points where $f$ is differentiable\footnote{A locally Lipschitz continuous function is differentiable almost everywhere by Rademacher's theorem (see \cite{cui21}).}. A point $x\in X$ satisfying $0\in\partial f(x)$ is called a (Clarke) stationary point.
\end{dfn}

The Clarke subdifferential $\partial f(x)$ at $x$ is a nonempty convex compact subset of $\rn$. If $f$ is differentiable at $x$, then $\partial f(x)=\{\nabla f(x)\}$ holds. If $f$ is convex, then the Clarke subdifferential coincides with the convex subdifferential defined by $\partial f(x)\coloneqq\{\xi\in\rn\mid f(y)-f(x)\ge\ang{\xi,y-x},\ \forall x,y\in\rn\}$. The stationarity condition $0\in\partial f(x)$ is a necessary condition for local minimality. Note that a local maximizer also satisfies $0\in\partial f(x)$. See \cite{clarke90,cui21} for details.

\begin{dfn}[nonsmooth invex functions (e.g., \cite{mishra08})]\label{dfn_invex_non}
A locally Lipschitz continuous function $f:X\to\r$ is said to be invex if there exists a vector-valued function $\eta:X\times X\to\rn$ such that
\e{\label{invex_non}
f(y)-f(x)\ge\ang{\xi,\eta(x,y)},\ \ \forall x,y\in X,\ \forall \xi\in\partial f(x).
}
\end{dfn}

When $f$ is differentiable, Definition \ref{dfn_invex_non} coincides with Definition \ref{dfn_invex}. Hereafter, ``invex'' is used in the sense of Definition \ref{dfn_invex_non}. We state the nonsmooth counterpart of Proposition \ref{prp_invex}. We give a complete proof in Appendix \ref{a_proof}, which is omitted in \cite{mishra08}.

\begin{prp}[{\cite[Theorem 4.33]{mishra08}}]\label{prp_invex_non}
Let $f:X\to\r$ be locally Lipschitz continuous. $f$ is invex if and only if every stationary point (a point $x^*\in X$ satisfying $0\in\partial f(x^*)$) is a global minimizer of $f$.
\end{prp}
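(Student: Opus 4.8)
The plan is to mirror the structure of the smooth case in Proposition \ref{prp_invex}, replacing the gradient $\nabla f(x)$ with an arbitrary Clarke subgradient $\xi\in\partial f(x)$ throughout. The statement is an ``if and only if'', so I would prove the two implications separately.

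For the forward direction, assume $f$ is invex. Let $x^*$ be a stationary point, so $0\in\partial f(x^*)$. The invexity inequality \eqref{invex_non} must hold for every $\xi\in\partial f(x^*)$; in particular I apply it with the choice $\xi=0$, which is legitimate precisely because $0\in\partial f(x^*)$. This gives $f(y)-f(x^*)\ge\ang{0,\eta(x^*,y)}=0$ for all $y\in X$, so $x^*$ is a global minimizer. This direction is immediate.

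For the converse, assume every stationary point is a global minimizer, and I must construct an explicit kernel $\eta$. The natural idea is to adapt the smooth formula \eqref{eta_smo}, but now the subdifferential $\partial f(x)$ is a set rather than a single vector, so I cannot simply divide by $\ang{\xi,\xi}$ for each $\xi$ independently. The cleanest fix is to split $X$ into stationary and nonstationary points. If $0\in\partial f(x)$, set $\eta(x,y)\coloneqq0$; then $x$ is a global minimizer by hypothesis, so $f(y)-f(x)\ge0=\ang{\xi,\eta(x,y)}$ holds for every $\xi$. If $0\notin\partial f(x)$, I need a single vector $\eta(x,y)$ such that $\ang{\xi,\eta(x,y)}\le f(y)-f(x)$ simultaneously for all $\xi\in\partial f(x)$. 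Since $\partial f(x)$ is a nonempty convex compact set not containing the origin, I would pick a vector $v=v(x)$ with $\ang{\xi,v}>0$ for all $\xi\in\partial f(x)$ (for instance, the minimum-norm element of $\partial f(x)$, using that $\partial f(x)$ is compact and convex, so $\ang{\xi,v}\ge\ang{v,v}=\|v\|^2>0$ by the projection/variational inequality for convex sets), and then define $\eta(x,y)\coloneqq\frac{(f(y)-f(x))\,v}{\min_{\xi\in\partial f(x)}\ang{\xi,v}}$ up to the sign handling. With this scaling, the worst case $\xi$ (the minimizer of $\ang{\xi,v}$) saturates the inequality and all other $\xi$ satisfy it with slack, so \eqref{invex_non} holds for every $\xi\in\partial f(x)$.

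The main obstacle is the converse construction: unlike the smooth case where a single gradient makes the division trivial, here I must produce one kernel value that dominates an entire set of subgradients at once. The key technical input is the compactness and convexity of $\partial f(x)$ (stated after Definition \ref{dfn_cla}), which guarantees both that a strictly separating direction $v$ exists when $0\notin\partial f(x)$ and that the infimum $\min_{\xi\in\partial f(x)}\ang{\xi,v}$ is attained and strictly positive, so the normalization is well-defined. I would also remark that, as in the smooth case, the resulting $\eta$ satisfies $\eta(x,x)=0$ and may be discontinuous, which is acceptable since the definition places no continuity requirement on the kernel.
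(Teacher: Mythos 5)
Your forward direction and your choice of the minimum-norm subgradient $v=\xi_x$ together with the variational inequality $\ang{\xi,v}\ge\ang{v,v}=\|v\|^2>0$ for all $\xi\in\partial f(x)$ are exactly the paper's ingredients. The gap is in the final normalization. With $\eta(x,y)=\frac{(f(y)-f(x))\,v}{\min_{\xi\in\partial f(x)}\ang{\xi,v}}=\frac{(f(y)-f(x))\,v}{\|v\|^2}$ you get $\ang{\xi,\eta(x,y)}=(f(y)-f(x))\cdot\frac{\ang{\xi,v}}{\|v\|^2}$, and the factor $\frac{\ang{\xi,v}}{\|v\|^2}$ is $\ge 1$, not $\le 1$. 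When $f(y)>f(x)$ this works against you: multiplying a positive quantity by something $\ge 1$ gives $\ang{\xi,\eta(x,y)}\ge f(y)-f(x)$, the reverse of \eqref{invex_non}. So the assertion that the worst-case $\xi$ saturates the inequality while ``all other $\xi$ satisfy it with slack'' is false whenever $f(y)>f(x)$, and the phrase ``up to the sign handling'' is hiding precisely the one nontrivial step of the nonsmooth proof.

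The paper's construction \eqref{eta_non} resolves this by putting $-|f(y)-f(x)|$ in the numerator and pointing $\eta$ along $-\xi_x$: since $-|f(y)-f(x)|\le f(y)-f(x)$ always holds, and multiplying a nonpositive number by a factor $\ge 1$ only decreases it, the chain $f(y)-f(x)\ge -|f(y)-f(x)|\ge -|f(y)-f(x)|\frac{\ang{\xi_x,\xi}}{\|\xi_x\|^2}=\ang{\xi,\eta(x,y)}$ holds for every $\xi\in\partial f(x)$. You could instead repair your own formula by splitting on the sign of $f(y)-f(x)$ and normalizing by $\max_{\xi\in\partial f(x)}\ang{\xi,v}$ when $f(y)>f(x)$ (attained by compactness, and then the factor is $\le 1$), but some such sign correction is unavoidable; as written, your kernel does not satisfy \eqref{invex_non}.
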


\subsection{Importance of kernel functions}

The sum of two invex functions is not necessarily invex. For example, $f_1(x,y)=x-y^2$ and $f_2(x,y)=-x$ are both invex since they have no stationary points, but the sum $f(x,y)=-y^2$ is not invex. The following theorem tells us that if two functions are invex with the same kernel function, then the sum is also invex with that kernel function. See Appendix \ref{a_proof} for a proof.

\begin{prp}[Extension of {\cite[Theorem 2.9]{mishra08}} to the nonsmooth setting]\label{prp_sum}
If $f:X\to\r$ and $g: X\to\r$ are invex with the same kernel function $\eta$, then $\alpha f+\beta g$ is invex with $\eta$ for any $\alpha,\beta\ge0$.
\end{prp}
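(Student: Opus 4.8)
The plan is to reduce the statement to a pointwise verification of the defining inequality \eqref{invex_non} for the function $h\coloneqq\alpha f+\beta g$, exploiting that $f$ and $g$ share the \emph{same} kernel $\eta$. Concretely, I would fix $x,y\in X$ and an arbitrary subgradient $\xi\in\partial h(x)$, and show that $h(y)-h(x)\ge\ang{\xi,\eta(x,y)}$. The crucial observation is that invexity of $h$ requires the inequality to hold for \emph{every} element of $\partial h(x)$, so it suffices to understand how an arbitrary $\xi\in\partial h(x)$ decomposes in terms of subgradients of $f$ and $g$.

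The second step is to invoke the calculus of Clarke subdifferentials. By positive homogeneity, $\partial(\alpha f)(x)=\alpha\,\partial f(x)$ and $\partial(\beta g)(x)=\beta\,\partial g(x)$ for $\alpha,\beta\ge0$, and by the Clarke sum rule one has the inclusion
\e{
\partial h(x)\subseteq\alpha\,\partial f(x)+\beta\,\partial g(x),\quad\forall x\in X.
}
Hence any $\xi\in\partial h(x)$ can be written as $\xi=\alpha\xi_f+\beta\xi_g$ with $\xi_f\in\partial f(x)$ and $\xi_g\in\partial g(x)$. Applying \eqref{invex_non} to $f$ with $\xi_f$ and to $g$ with $\xi_g$ (both valid because $\eta$ is common), multiplying the first inequality by $\alpha\ge0$ and the second by $\beta\ge0$, and adding, I obtain
\e{\ald{
h(y)-h(x)
&=\alpha\prn{f(y)-f(x)}+\beta\prn{g(y)-g(x)}\\
&\ge\alpha\ang{\xi_f,\eta(x,y)}+\beta\ang{\xi_g,\eta(x,y)}\\
&=\ang{\alpha\xi_f+\beta\xi_g,\eta(x,y)}=\ang{\xi,\eta(x,y)},
}}
which is exactly the invexity inequality for $h$ with kernel $\eta$.

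The only delicate point — and the one I would flag as the main obstacle — is the \emph{direction} of the subdifferential sum rule. For Clarke subdifferentials the sum rule holds in general only as the inclusion displayed above, with equality failing unless some regularity (e.g.\ Clarke regularity of the summands) is present. Fortunately, the inclusion is precisely the direction needed here: it guarantees that \emph{every} $\xi\in\partial h(x)$ admits a decomposition $\alpha\xi_f+\beta\xi_g$, and invexity only asks us to bound such elements from below. The reverse inclusion, which could fail, is never used. I would therefore justify the inclusion by citing the standard Clarke calculus (e.g.\ \cite{clarke90}) and remark explicitly that equality is not required, so no additional hypotheses on $f$ or $g$ beyond local Lipschitz continuity and shared-kernel invexity are needed. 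In the differentiable case the argument collapses to the trivial identity $\nabla h(x)=\alpha\nabla f(x)+\beta\nabla g(x)$, recovering the smooth statement.
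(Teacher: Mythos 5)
Your proposal is correct and follows essentially the same route as the paper: decompose an arbitrary $\xi\in\partial(\alpha f+\beta g)(x)$ via the Clarke sum-rule inclusion $\partial(\alpha f+\beta g)(x)\subseteq\alpha\partial f(x)+\beta\partial g(x)$, then add the two invexity inequalities with the shared kernel. Your explicit remark that only the inclusion (not equality) is needed is a nice clarification of a point the paper uses implicitly.
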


Also, invexity is not necessarily preserved by constraints. For example, the function $f(x,y)=x-y^2$ is invex, but its minimization under a box constraint $[-1,0]\times[-2,1]$ has a non-global local minimizer $(-1,1)$ while the global minimizer is $(-1,-2)$. The following theorem shows that if the objective and constraint functions are invex with the same kernel function, then the constrained optimization problem is invex. See Appendix \ref{a_proof} for a proof.

\begin{prp}[Extension of {\cite[Section 5.1]{mishra08}} to the nonsmooth case]\label{prp_const}
Consider a constrained optimization problem
\e{\ald{
&\mathrm{minimize} & & f(x)\\
&\mathrm{subject\ to} & & g_i(x)\le0\ (i=1,\ldots,m).
}}
If $f:X\to\r$ and $g_i:X\to\r\ (i=1,\ldots,m)$ are invex with the same kernel function $\eta$, then every point satisfying the Karush--Kuhn--Tucker (KKT) conditions 
\e{\ald{\label{kkt}
& 0\in\partial f(x)+\sum_{i=1}^m\lambda_i\partial g_i(x),\\
& g_i(x)\le0,\ \lambda_i\ge0,\ \lambda_ig_i(x)=0\ (i=1,\ldots,m),
}}
is globally optimal.
\end{prp}

Propositions \ref{prp_sum} and \ref{prp_const} highlight the importance of explicit kernel functions for invex functions. 
Such kernel functions are not only useful for theoretical analysis but also play a role in optimization algorithms for invex problems \cite{barik23}. 
In many cases, invexity is established without constructing kernel functions, for example, by verifying that every stationary point is globally optimal \cite{pinilla22}. 
In contrast, in Section \ref{sec_ex}, we develop systematic methods for proving invexity by explicitly constructing kernel functions. 
Our goal is to obtain kernel functions that are simple, reflect the structure of the underlying function, and avoid case-by-case definitions such as \eqref{eta_non}.

\section{Connections to other generalizations of convexity}\label{sec_rel}

We show the relationship between invex functions, pseudoconvex functions, and quasiconvex functions for possibly nonsmooth locally Lipschitz continuous functions. This generalizes the result in \cite{mishra08} to the nonsmooth setting and is important for treating nonsmooth non-pseudoconvex invex functions that often appear in signal processing.  We summarize the relationship and examples in Figures \ref{fig_venn} and \ref{fig_ex}. Then, we provide a characterization theorem for pseudoconvex functions in terms of kernel functions.

\begin{figure}[t]
    \centering
    \includegraphics[width=7cm]{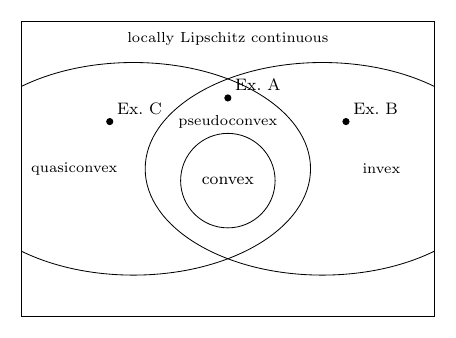}
    \caption{The Venn diagram of convex, pseudoconvex, quasiconvex, and invex functions under the assumption of locally Lipschitz continuity. Ex.~A-C are shown in Figure \ref{fig_ex}. Note that, under the assumption of local Lipschitz continuity, the class of pseudoconvex functions coincides with the intersection of the classes of invex and quasiconvex functions (Theorem \ref{thm_quasi_invex}).}
    \label{fig_venn}
\end{figure}

\begin{figure}[t]
  \centering
  \begin{tabular}{ccc}
  \begin{minipage}[t]{0.33\hsize}
    \centering
    \includegraphics[width=3.5cm]{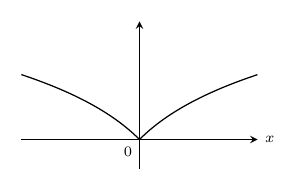}
    \subcaption{Ex.~A (the graph of $f(x)=\log(|x|+1)$)}
  \end{minipage} &
  \hspace{-2mm}
  \begin{minipage}[t]{0.33\hsize}
    \centering
    \includegraphics[width=3cm]{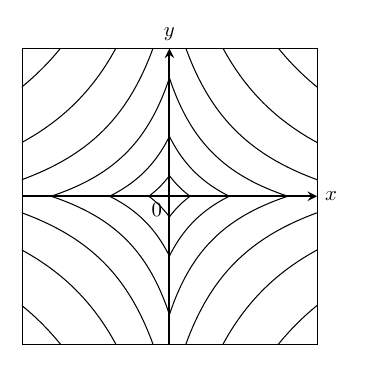}
    \subcaption{Ex.~B (contour lines of $f(x,y)=\log(|x|+1)+\log(|y|+1)$)}
  \end{minipage} &
  \hspace{-2mm}
  \begin{minipage}[t]{0.33\hsize}
    \centering
    \includegraphics[width=3.5cm]{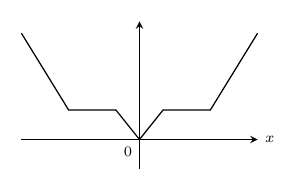}
    \subcaption{Ex.~C}
  \end{minipage}
  \end{tabular}
  \caption{Examples of functions shown in Figure \ref{fig_venn}. (a) is pseudoconvex. (b) is invex but not pseudoconvex (has nonconvex sublevel sets). (c) is quasiconvex but not invex (has stationary points that are not global minimizers).}
  \label{fig_ex}
\end{figure}

\begin{dfn}[pseudoconvex function (e.g., \cite{penot97})]\label{dfn_pseudo_non}
A locally Lipschitz continuous function $f:C\to\r$ is said to be pseudoconvex if
\e{
f(x)>f(y)\ \Rightarrow\ \forall \xi\in\partial f(x),\ \ang{\xi,y-x}<0
\label{pseudo1}
}
or equivalently,
\e{
\exists \xi\in\partial f(x),\ \ang{\xi,y-x}\ge 0\ \Rightarrow\ f(x)\le f(y)
\label{pseudo2}
}
holds for any $x,y\in C$.
\end{dfn}

\begin{dfn}[quasiconvex function (e.g.,\cite{penot97})]
A function $f:C\to\r$ is said to be quasiconvex if its sublevel set $\{x\in C \mid f(x)\le\alpha\}$ is convex for any $\alpha\in\mathbb{R}$.
\end{dfn}

If $f$ is locally Lipschitz continuous, there is an equivalent definition of quasiconvexity:
\e{
f(x)> f(y)\ \Rightarrow\ \forall \xi\in\partial f(x),\ \ang{\xi,y-x}\le 0
\label{quasi2}
}
holds for any $x,y\in C$ \cite[Proposition 3.1]{penot97}. This implies that every pseudoconvex function is quasiconvex.

\begin{thm}[Extension of {\cite[Theorem 2.25]{mishra08}} to the nonsmooth case]
Consider locally Lipschitz continuous functions defined on an open convex set $C\subseteq\rn$. The class of pseudoconvex functions is strictly included in the class of invex functions if $n>1$. If $n=1$, the two classes coincide. 
\end{thm}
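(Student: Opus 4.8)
My plan is to treat the three assertions separately. The inclusion ``pseudoconvex $\Rightarrow$ invex,'' which holds for every $n$, I would obtain immediately from Proposition \ref{prp_invex_non} by checking that every stationary point of a pseudoconvex $f$ is a global minimizer: if $0\in\partial f(x^*)$, then taking $\xi=0$ in the characterization \eqref{pseudo2} gives $\ang{0,y-x^*}=0\ge0$, hence $f(x^*)\le f(y)$ for all $y\in C$, so $x^*$ is a global minimizer and $f$ is invex.

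For strictness when $n>1$ I would exhibit, for every $n\ge2$, a single invex non-pseudoconvex function, taking the separable $f(x)=\sum_{i=1}^n\log(|x_i|+1)$ of Ex.~B (Figure \ref{fig_ex}). Invexity again follows from Proposition \ref{prp_invex_non}: writing $\phi(t)=\log(|t|+1)$, one has $\partial f(x)\subseteq\partial\phi(x_1)\times\cdots\times\partial\phi(x_n)$, and since $\phi'(t)\ne0$ for $t\ne0$ while $0\in\partial\phi(0)=[-1,1]$, the condition $0\in\partial f(x)$ forces $x=0$; as $f\ge0=f(0)$, the origin is the unique stationary point and a global minimizer. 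Non-pseudoconvexity I would get from non-quasiconvexity: the sublevel set $\{f\le\log2\}=\{\prod_i(|x_i|+1)\le2\}$ contains the axis points $(1,0,\dots)$ and $(0,1,0,\dots)$ but not their midpoint $(\tfrac12,\tfrac12,0,\dots)$, and \eqref{quasi2} shows every pseudoconvex function is quasiconvex.

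The substantive direction is the coincidence for $n=1$, i.e.\ that an invex $f$ on an interval $C\subseteq\r$ is pseudoconvex. I would verify \eqref{pseudo1} directly. Fixing $x,y$ with $f(x)>f(y)$, the point $x$ is not a global minimizer, so invexity yields $0\notin\partial f(x)$; being a compact interval, $\partial f(x)$ then has constant sign. Assuming $\partial f(x)\subseteq(0,\infty)$ (the negative case is symmetric), I must show $y<x$, which gives $\xi(y-x)<0$ for all $\xi\in\partial f(x)$. Arguing by contradiction with $y>x$, I would take a maximizer $z$ of $f$ on $[x,y]$; since $f(z)\ge f(x)>f(y)$ we have $z\ne y$, and if $z$ is interior it is a local maximizer, so $0\in\partial f(z)$ and invexity makes $z$ a global minimizer, contradicting $f(z)>f(y)$.

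The main obstacle will be excluding the boundary case $z=x$, since a priori the maximizer could sit at the left endpoint. I plan to rule it out using the Lebourg mean value theorem on $[x,t]$ as $t\downarrow x$, which produces $\zeta_t\in\partial f(c_t)$ with $c_t\in(x,t)$ and $\zeta_t=(f(t)-f(x))/(t-x)\le0$; upper semicontinuity (closed graph) of the Clarke subdifferential then forces a nonpositive element into $\partial f(x)$, contradicting $\partial f(x)\subseteq(0,\infty)$. With $z=x$ excluded, the interior argument closes the proof and gives $y<x$. A cleaner alternative, if Theorem \ref{thm_quasi_invex} is established independently of this result, is to first show that a one-dimensional invex function is quasiconvex (a non-interval sublevel set would furnish an interior maximizer that is a non-minimizing stationary point) and then invoke $\text{pseudoconvex}=\text{invex}\cap\text{quasiconvex}$.
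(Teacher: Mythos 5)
Your proposal is correct, and for the substantive direction ($n=1$) it takes a genuinely different route from the paper. The paper's proof argues via quasiconvexity: assuming a sublevel set of an invex $f$ on an interval is disconnected, it extracts two consecutive components with common level value $\alpha$, applies the Clarke mean value theorem on the gap to produce a stationary point with value $>\alpha$, contradicting invexity; pseudoconvexity then follows only by combining this with Theorem \ref{thm_quasi_invex} (pseudoconvex $=$ invex $\cap$ quasiconvex), which is in fact stated and proved \emph{after} this theorem --- your closing remark correctly identifies this as the paper's implicit structure. Your primary argument instead verifies \eqref{pseudo1} directly: from $f(x)>f(y)$ you get $0\notin\partial f(x)$, hence a sign for $\partial f(x)$, and you exclude $y>x$ by locating a maximizer of $f$ on $[x,y]$, killing the interior case via ``local maximizer $\Rightarrow$ stationary $\Rightarrow$ global minimizer'' and the endpoint case $z=x$ via Lebourg's mean value theorem plus the closed graph and local boundedness of $\partial f$. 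Both arguments are sound; yours is longer but self-contained (no forward dependence on Theorem \ref{thm_quasi_invex}), while the paper's is shorter and reuses the quasiconvexity machinery it needs anyway. For strictness when $n>1$ the paper merely points to Section \ref{sec_non_quasi}, whereas you fully verify the example $f(x)=\sum_{i=1}^n\log(\abs{x_i}+1)$, including the explicit nonconvex sublevel set $\{\prod_i(\abs{x_i}+1)\le 2\}$; this is a welcome concretization and matches the paper's intended witness. The first inclusion (substituting $\xi=0$ into \eqref{pseudo2}) is identical in both.
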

\begin{proof}
Every pseudoconvex function is invex since its stationary point is always a global minimizer (substitute $\xi=0$ in \eqref{pseudo2}). When $n>1$, there exist invex functions that are not quasiconvex, hence not pseudoconvex. See Section \ref{sec_non_quasi} for such examples.

We consider the case $n=1$. Let $f:C\to\r$ be invex. We show that level sets $L_f(\alpha)\coloneqq\{x\in C\mid f(x)\le\alpha\}$ are convex for any $\alpha\in\r$. Assume that there exists $\alpha\in\r$ such that the level set $L_f(\alpha)$ is not convex, i.e., it consists of at least two disjoint intervals. Consider two consecutive intervals $I_1,I_2\subset L_f(\alpha)$ such that $I_1$ lies on the left side of $I_2$. By the continuity of $f$, there exist the right endpoint $\bar{x}_1$ of $I_1$ and the left endpoint $\bar{x}_2$ of $I_2$ satisfying $\bar{x}_1<\bar{x}_2$ and $f(\bar{x}_1)=f(\bar{x}_2)=\alpha$. Then, by the mean value theorem for Clarke subdifferentials \cite[Theorem 2.3.7]{clarke90}, there exists $x^*\in(\bar{x}_1,\bar{x}_2)$ such that $0\in\partial f(x^*)$. Since $x^*\notin L_f(\alpha)$ (i.e., $f(x^*)>\alpha$), $x^*$ is not a global minimizer, which contradicts $f$ being invex.
\end{proof}

\begin{thm}[Extension of {\cite[Theorem 2.27]{mishra08}} to the nonsmooth case]\label{thm_quasi_invex}
Consider locally Lipschitz continuous functions defined on an open convex set $C\subseteq\rn$. Under the assumption of quasiconvexity, the classes of pseudoconvex functions and invex functions coincide.
\end{thm}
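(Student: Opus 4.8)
The plan is to reduce the statement to a single inclusion. One direction is already free: a pseudoconvex function is quasiconvex (noted right after \eqref{quasi2}), and it is invex because substituting $\xi=0$ into \eqref{pseudo2} shows every stationary point is a global minimizer, whence invexity follows from Proposition \ref{prp_invex_non}. This gives the inclusion of pseudoconvex functions into the class of functions that are simultaneously quasiconvex and invex. So the whole theorem reduces to the reverse inclusion, and I would prove: \emph{a function that is both quasiconvex and invex is pseudoconvex.}

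To that end I would let $f$ be quasiconvex and invex, fix $x,y\in C$ with $f(x)>f(y)$, and aim to verify \eqref{pseudo1}, i.e.\ $\langle\xi,y-x\rangle<0$ for every $\xi\in\partial f(x)$. The quasiconvexity characterization \eqref{quasi2} already supplies the non-strict inequality $\langle\xi,y-x\rangle\le 0$ for all such $\xi$, so the only thing left to establish is that equality cannot occur. I would argue by contradiction, assuming some $\bar\xi\in\partial f(x)$ satisfies $\langle\bar\xi,y-x\rangle=0$. First I would record that $\bar\xi\neq 0$: if $\bar\xi=0$, then $0\in\partial f(x)$, so $x$ is a stationary point and hence, by invexity and Proposition \ref{prp_invex_non}, a global minimizer, which contradicts $f(x)>f(y)$.

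The key step I expect is a linear perturbation along the offending subgradient. Setting $y_t\coloneqq y+t\bar\xi$, openness of $C$ and continuity of $f$ (from local Lipschitzness) guarantee that for all sufficiently small $t>0$ we have $y_t\in C$ and, since $f(y)<f(x)$, also $f(y_t)<f(x)$. A direct computation then gives $\langle\bar\xi,y_t-x\rangle=\langle\bar\xi,y-x\rangle+t\|\bar\xi\|^2=t\|\bar\xi\|^2>0$. On the other hand, applying \eqref{quasi2} to the pair $(x,y_t)$ — legitimate because $f(x)>f(y_t)$ — forces $\langle\bar\xi,y_t-x\rangle\le 0$. This contradiction closes the argument.

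The main subtlety, and the only place the nonsmooth setting departs from the classical differentiable proof, is bookkeeping rather than a genuine obstacle: I must keep the perturbed point $y_t$ inside the open domain $C$ while preserving the strict inequality $f(y_t)<f(x)$ (both handled by openness and continuity), and I must apply the quantified-over-all-subgradients condition \eqref{quasi2} to the single fixed subgradient $\bar\xi$ responsible for the failure of strictness. The conceptual content is simply that the same perturbation $y\mapsto y+t\bar\xi$ that works in the smooth case still yields a strictly positive pairing with $\bar\xi$, contradicting quasiconvexity.
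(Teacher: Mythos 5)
Your argument is correct and follows essentially the same route as the paper: the nontrivial inclusion is handled by contradiction, reducing via quasiconvexity \eqref{quasi2} to the equality case $\langle\bar\xi,y-x\rangle=0$ and then exploiting that the strict sublevel set $\{z\mid f(z)<f(x)\}$ is open. Your explicit perturbation $y_t=y+t\bar\xi$ is just the concrete form of the paper's ``supporting hyperplane through an interior point'' contradiction (and it makes the bookkeeping about $y_t\in C$ explicit), so no substantive difference.
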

\begin{proof}
Let $f:C\to\r$ be a quasiconvex function. It suffices to show that if $f$ is invex, then $f$ satisfies the definition of pseudoconvexity \eqref{pseudo2}.

First, if $x^0\in C$ satisfies $0\in\partial f(x^0)$, then, by the invexity, $f(x^0)\le f(x)$ holds for any $x\in C$. Therefore, the definition of pseudoconvexity \eqref{pseudo2} holds.

Next, we consider the case when $x^0$ satisfies $0\notin\partial f(x^0)$ and prove that \eqref{pseudo2} also holds for this case. Suppose, for the sake of contradiction, that there exists $x^1\in C$ and $0 \neq \xi^0\in\partial f(x^0)$ such that
\e{\label{prf_quasi1}
\ang{\xi^0,x^1-x^0}\ge 0,
}
but
\e{\label{prf_quasi2}
f(x^0)>f(x^1).
}
By \eqref{prf_quasi2} and the quasiconvexity \eqref{quasi2}, we have
\e{
\ang{\xi,x^1-x^0}\le 0
}
for any $\xi\in\partial f(x)$. Thus, combined with \eqref{prf_quasi1}, it follows
\e{\label{prf_quasi3}
\ang{\xi^0,x^1-x^0}=0.
}
Note that $H=\{x\in\rn\mid\langle\xi^0,x-x^0\rangle=0\}$ is a supporting hyperplane of a sublevel set $X_0=\{x\in\rn\mid f(x)\le f(x^0)\}$, which is nonempty, closed, and convex due to the continuity and quasiconvexity of $f$. By \eqref{prf_quasi2} and \eqref{prf_quasi3}, $x^1$ lies in the interior of $X_0$ and on its supporting hyperplane, which is a contradiction. Therefore, \eqref{pseudo2} holds for any $x,y\in C$.
\end{proof}

The following theorem characterizes pseudoconvex functions in terms of kernel functions.

\begin{thm}\label{thm_inv_pse}
Let $f:C\to\r$ be invex. $f$ is pseudoconvex if and only if $f$ admits a kernel function of the form $\eta(x,y)=\alpha(x,y)(y-x)$ with $\alpha(x,y)\ge0$.
\end{thm}
\begin{proof}
Assume that $f$ admits a kernel function of the form $\eta(x,y)=\alpha(x,y)(y-x)$ with $\alpha(x,y)\ge0$ and show that \eqref{pseudo2} holds. If there exists $\xi\in\partial f(x)$ such that $\ang{\xi,y-x}\ge0$, then by the invexity and $\alpha(x,y)\ge0$, we obtain
\e{
f(y)-f(x) \ge\ang{\xi,\eta(x,y)}=\alpha(x,y)\ang{\xi,y-x} \ge 0
}
Thus, $f$ is pseudoconvex.

Conversely, we set 
\e{
\eta(x,y)=
\begin{cases}
\frac{(f(y)-f(x))}{\ang{\xi^*,y-x}}(y-x) & \mathrm{if}\ f(x)>f(y),\\
0 & \mathrm{otherwise},
\end{cases}
}
where we define $\xi^*\in\partial f(x)$, for any $x,y\in C$, by $\xi^*=\mathrm{arg\,min}_{\xi\in\partial f(x)}\,\ang{\xi,x-y}$ so that the necessary optimality condition yields $\ang{\xi-\xi^*,y-x}\le0$, hence $\ang{\xi,y-x}/\ang{\xi^*,y-x}\ge1$, for any $\xi\in\partial f(x)$. Note that, by the pseudoconvexity \eqref{pseudo1}, $\ang{\xi^*,y-x}<0$ for any $x,y\in C$ satisfying $f(x)>f(y)$. The above $\eta$ is the kernel function of $f$ of the form $\alpha(x,y)(y-x)$ with $\alpha(x,y)\ge0$.
\end{proof}

In the following section, we study kernel functions of several pseudoconvex and non-pseudoconvex invex functions.

In other generalizations of convexity, quasar-convex functions and functions satisfying the Polyak--{\L}ojasiewicz inequality are known to be invex (see \cite{barik23,hinder20}).

\section{Explicit kernel constructions for invex functions}\label{sec_ex}

\subsection{Fractional programming: pseudoconvex example 1}

Fractional programming is a class of problems that can be written as follows:
\e{
\underset{x}{\mathrm{minimize}}\ \frac{g(x)}{h(x)}.
}
Under certain assumptions (e.g., $g$ is convex, nonnegative, and $h$ is concave, positive), this problem becomes a pseudoconvex optimization problem \cite{stancu12}. Such problems naturally arise in contexts where one optimizes a ratio. Applications of fractional programming and pseudoconvex optimization in economics and management science are presented in \cite{stancu12}.

\begin{thm}\label{thm_frac}
Let $f:C\to\r$ be a function defined by
\e{
f(x)=g(x)/h(x)
}
where $g:C\to\r$ is convex and $g(x)\ge0$ for any $x\in C$ and $h:C\to\r$ is concave and $h(x)>0$ for any $x\in C$. Then, $f$ is invex with a kernel function 
\e{
\eta(x,y)=\frac{h(x)}{h(y)}(y-x).
}
\end{thm}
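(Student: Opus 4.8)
The plan is to verify the defining inequality \eqref{invex_non} directly: for every $x,y\in X$ and every $\xi\in\partial f(x)$ I would establish $f(y)-f(x)\ge\ang{\xi,\eta(x,y)}$ with the proposed kernel $\eta(x,y)=\tfrac{h(x)}{h(y)}(y-x)$. The argument splits into two ingredients: (i) describing an arbitrary element of $\partial f(x)$ in terms of a subgradient of $g$ and a supergradient of $h$, and (ii) feeding that description into the convexity/concavity first-order inequalities. A useful observation up front is that, since the target must hold for \emph{all} $\xi\in\partial f(x)$, I only need an \emph{inclusion} for $\partial f(x)$, not an exact formula. This is convenient because Clarke calculus for products and quotients generally yields only inclusions.

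First I would compute the subdifferential. Writing $f=g\cdot(1/h)$ and using that $t\mapsto 1/t$ is $C^1$ on $(0,\infty)$ together with $h(x)>0$, the chain rule \cite[Theorem 2.3.9 (iii)]{clarke90} gives $\partial(1/h)(x)=-h(x)^{-2}\partial h(x)$, and the product rule \cite[Proposition 2.3.13]{clarke90} then yields
\e{
\partial f(x)\subseteq\frac{1}{h(x)}\partial g(x)-\frac{g(x)}{h(x)^2}\partial h(x).
}
Hence every $\xi\in\partial f(x)$ has the form $\xi=h(x)^{-1}\xi_g-g(x)h(x)^{-2}\xi_h$ for some $\xi_g\in\partial g(x)$ and $\xi_h\in\partial h(x)$. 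Convexity of $g$ gives the subgradient inequality $\ang{\xi_g,y-x}\le g(y)-g(x)$, while concavity of $h$ means its Clarke subdifferential is the superdifferential, so $\ang{\xi_h,y-x}\ge h(y)-h(x)$.

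Next I would substitute into $\ang{\xi,\eta(x,y)}=\tfrac{h(x)}{h(y)}\ang{\xi,y-x}$, obtaining
\e{
\ang{\xi,\eta(x,y)}=\frac{1}{h(y)}\ang{\xi_g,y-x}-\frac{g(x)}{h(x)h(y)}\ang{\xi_h,y-x}.
}
The coefficients $1/h(y)$ and $g(x)/(h(x)h(y))$ are both nonnegative precisely because $h>0$ and $g\ge0$; this is exactly where the sign hypotheses enter. Bounding the first term above by the subgradient inequality and the second above by the supergradient inequality (the negative coefficient turns the lower bound on $\ang{\xi_h,y-x}$ into an upper bound) gives
\e{
\ang{\xi,\eta(x,y)}\le\frac{g(y)-g(x)}{h(y)}-\frac{g(x)(h(y)-h(x))}{h(x)h(y)},
}
and a short simplification collapses the right-hand side to $g(y)/h(y)-g(x)/h(x)=f(y)-f(x)$, which is the invexity inequality.

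The only genuine subtlety — the hard part — is the subdifferential step: one must justify the quotient representation for merely locally Lipschitz convex $g$ (so $\partial g$ is the convex subdifferential) and concave $h$ (so $\partial h$ is the superdifferential), and confirm that the inclusion rather than an equality is all that is needed. Once the representation and the two first-order inequalities are in hand, the rest is routine algebra, and the nonnegativity of the coefficients supplied by $g\ge0$ and $h>0$ is exactly what makes both estimates point in the same (upper) direction.
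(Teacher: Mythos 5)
Your proposal is correct and follows essentially the same route as the paper: decompose each $\xi\in\partial f(x)$ as $\tfrac{1}{h(x)}\xi_g-\tfrac{g(x)}{h(x)^2}\xi_h$, apply the first-order inequalities for convex $g$ and concave $h$, and use $g\ge0$, $h>0$ to keep the estimates pointing the right way; the paper packages the subdifferential step via Clarke's quotient rule rather than your product-plus-chain-rule route, but the resulting representation and algebra are identical. Your observation that only the inclusion $\partial f(x)\subseteq\tfrac{1}{h(x)}\partial g(x)-\tfrac{g(x)}{h(x)^2}\partial h(x)$ is needed is in fact slightly more careful than the paper, which writes the quotient rule as an equality.
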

\begin{proof}
By the quotient rule of Clarke subdifferentials \cite[Proposition 2.3.14]{clarke90}, we have
\e{
\partial f(x)=\frac{1}{h(x)^2}(h(x)\partial g(x)-g(x)\partial h(x))=\frac{1}{h(x)}(\partial g(x)-f(x)\partial h(x)).
}
By the convexity of $g$, and the concavity of $h$, we obtain
\e{
\ald{
\ang{\xi_f,\eta(x,y)}
&= \frac{1}{h(y)}\ang{\xi_g-f(x)\xi_h,y-x}\\
&\le \frac{1}{h(y)}(g(y)-g(x)-f(x)(h(y)-h(x)))\\
&= f(y)-f(x),
}
}
for any $\xi_f\in\partial f(x)$.
\end{proof}

It is known that the function defined in Theorem \ref{thm_frac} is actually pseudoconvex \cite{stancu12}. Indeed, it is a direct consequence of Theorem \ref{thm_inv_pse}. 



\subsection{Concave-convex composites: pseudoconvex example 2}

\begin{thm}\label{thm_comp}
Let $f:C\to\r$ be a function defined by 
\e{
f(x)=\varphi (g(x))
}
where $\varphi:I\to\r$ ($I\subseteq\r$ open) is concave, continuously differentiable, and monotonically increasing ($\varphi'(t)>0$ for any $t\in I$) and $g:C\to\r$ is convex and $g(C)\subseteq I$. Then, $f$ is invex with a kernel function 
\e{
\eta(x,y)=\frac{\varphi'(g(y))}{\varphi'(g(x))}(y-x).
}
\end{thm}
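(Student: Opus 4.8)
The plan is to mirror the structure of the proofs of Theorems \ref{thm_frac} and \ref{thm_inv}: first compute the Clarke subdifferential of the composite $f=\varphi\circ g$, then bound $\ang{\xi,\eta(x,y)}$ from above by $f(y)-f(x)$ using one convexity inequality and one concavity inequality. Since $\varphi$ is continuously differentiable, the chain rule for Clarke subdifferentials \cite[Theorem 2.3.9]{clarke90} yields the exact identity $\partial f(x)=\varphi'(g(x))\partial g(x)$, so every $\xi\in\partial f(x)$ has the form $\xi=\varphi'(g(x))\zeta$ with $\zeta\in\partial g(x)$. The hypothesis $\varphi'>0$ guarantees that this scaling factor is positive, which will matter for preserving the directions of the inequalities.

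Next I would substitute this representation into the kernel: since the factor $\varphi'(g(x))$ in $\xi$ cancels the denominator of $\eta(x,y)$, I obtain the clean expression $\ang{\xi,\eta(x,y)}=\varphi'(g(y))\ang{\zeta,y-x}$. At this point two elementary inequalities finish the argument. By the convex subgradient inequality for $g$ we have $\ang{\zeta,y-x}\le g(y)-g(x)$, and because $\varphi'(g(y))>0$ multiplying through preserves the sense of the inequality, giving $\ang{\xi,\eta(x,y)}\le\varphi'(g(y))(g(y)-g(x))$. Then the gradient inequality for the concave function $\varphi$, anchored at the point $g(y)$ and evaluated at $g(x)$, reads $\varphi(g(x))\le\varphi(g(y))+\varphi'(g(y))(g(x)-g(y))$, which rearranges precisely to $\varphi'(g(y))(g(y)-g(x))\le\varphi(g(y))-\varphi(g(x))=f(y)-f(x)$. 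Chaining these inequalities gives $\ang{\xi,\eta(x,y)}\le f(y)-f(x)$ for all $\xi\in\partial f(x)$, which is exactly the invexity condition \eqref{invex_non}.

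The computation itself is short; the genuinely delicate points are bookkeeping rather than depth. The first is recognizing that the concavity tangent inequality must be anchored at $g(y)$ rather than $g(x)$, which is precisely why the kernel carries $\varphi'(g(y))$ in the numerator and $\varphi'(g(x))$ in the denominator---this is the structural insight that makes the cancellation work. The second is ensuring that every multiplication by $\varphi'$ is by a strictly positive number so that no inequality flips; this is where the monotonicity hypothesis $\varphi'(t)>0$ is used, in tandem with the concavity of $\varphi$ and the convexity of $g$. I expect the main obstacle, such as it is, to be justifying the \emph{equality} in the Clarke chain rule: this holds only because the outer map $\varphi$ is $C^1$, and it is the one place where the smoothness of $\varphi$, rather than mere concavity, is essential.
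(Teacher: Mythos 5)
Your proposal is correct and follows essentially the same route as the paper's proof: the Clarke chain rule giving $\partial f(x)=\varphi'(g(x))\partial g(x)$, the concavity inequality for $\varphi$ anchored at $g(y)$, and the convex subgradient inequality for $g$ multiplied by the positive factor $\varphi'(g(y))$; you merely chain the inequalities in the opposite order (from $\ang{\xi,\eta(x,y)}$ up to $f(y)-f(x)$ rather than from $f(y)-f(x)$ down). The observations about why the kernel carries $\varphi'(g(y))$ in the numerator and why the $C^1$ hypothesis on $\varphi$ is needed for equality in the chain rule match the paper's argument exactly.
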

\begin{proof}
By the Chain rule of Clarke subdifferentials \cite[Theorem 2.3.9 (ii)]{clarke90}, $\partial f(x)=\varphi'(g(x))\partial g(x)$. We obtain
\e{\ald{
f(y)-f(x)
&= \varphi(g(y))-\varphi(g(x))\\
&\ge \varphi'(g(y))(g(y)-g(x))\\
&\ge \varphi'(g(y))\ang{\xi_g,y-x}\\
&= \ang{\xi_f,\frac{\varphi'(g(y))}{\varphi'(g(x))}(y-x)}
}}
for any $\xi_f=\varphi'(g(x))\xi_g\in\partial f(x)$ with $\xi_g\in\partial g(x)$, where the first inequality follows from the concavity of $\varphi$ and the second inequality follows from the convexity of $g$ and the positivity of $\varphi'(g(y))$.
\end{proof}

By Theorem \ref{thm_inv_pse}, the function defined in Theorem \ref{thm_comp} is actually pseudoconvex. We obtain the following three corollaries, which are used in the following subsection.

\begin{crl}\label{crl_log}
Let $f:C\to\r$ be a function defined by 
\e{
f(x)=\log g(x)
}
where $g:C\to\r$ is convex and $g(x)>0$ for any $x\in C$. Then, $f$ is invex with a kernel function 
\e{
\eta(x,y)=\frac{g(x)}{g(y)}(y-x).
}
\end{crl}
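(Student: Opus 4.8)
The plan is to obtain Corollary \ref{crl_log} as a direct specialization of Theorem \ref{thm_comp}. The theorem constructs a kernel function for any composite $f=\varphi\circ g$ in which $g$ is convex and $\varphi$ is concave, continuously differentiable, and strictly increasing on an open interval $I$ containing $g(X)$. Since $\log g(x)=\varphi(g(x))$ with $\varphi(t)=\log t$, the entire task reduces to verifying that $\varphi=\log$ satisfies all the hypotheses of Theorem \ref{thm_comp} on a suitable domain, and then reading off the kernel formula.

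First I would set $I=(0,\infty)$, which is open, and observe that the positivity assumption $g(x)>0$ for all $x\in X$ guarantees $g(X)\subseteq I$. Next I would check the three conditions on $\varphi(t)=\log t$: it is continuously differentiable on $I$ with $\varphi'(t)=1/t$; it is strictly increasing since $\varphi'(t)=1/t>0$ for all $t\in I$; and it is concave since $\varphi''(t)=-1/t^2<0$ on $I$. With these verifications, $g$ convex, and $g(X)\subseteq I$, the hypotheses of Theorem \ref{thm_comp} are met.

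Finally I would substitute into the kernel formula supplied by Theorem \ref{thm_comp}, namely $\eta(x,y)=\frac{\varphi'(g(y))}{\varphi'(g(x))}(y-x)$. Using $\varphi'(t)=1/t$, this ratio becomes $\frac{1/g(y)}{1/g(x)}=\frac{g(x)}{g(y)}$, yielding exactly the claimed kernel $\eta(x,y)=\frac{g(x)}{g(y)}(y-x)$. This completes the argument.

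There is essentially no obstacle here, since the result is an instance of an already-proved theorem; the only point requiring any care is confirming that the domain $I=(0,\infty)$ is correctly chosen so that the positivity of $g$ places $g(X)$ inside the interval on which $\log$ has all the required properties. I would also note in passing that, as with the other corollaries of Theorem \ref{thm_comp}, the resulting kernel has the form $\alpha(x,y)(y-x)$ with $\alpha(x,y)=g(x)/g(y)>0$, so Theorem \ref{thm_inv_pse} immediately yields that $f=\log g$ is in fact pseudoconvex.
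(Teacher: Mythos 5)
Your proposal is correct and follows exactly the paper's intended route: the corollary is obtained by specializing Theorem \ref{thm_comp} with $\varphi(t)=\log t$ on $I=(0,\infty)$, verifying $\varphi'(t)=1/t>0$ and $\varphi''(t)=-1/t^2<0$, and substituting into the kernel formula. The paper leaves these verifications implicit, so your write-up is simply a more explicit version of the same argument.
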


\begin{crl}\label{crl_p}
Let $f:C\to\r$ be a function defined by
\e{
f(x)=g(x)^p
}
where $0<p<1$ and $g:C\to\r$ is convex and $g(x)>0$ for any $x\in C$. Then, $f$ is invex with a kernel function 
\e{
\eta(x,y)=\prn{\frac{g(y)}{g(x)}}^{p-1}(y-x).
}
\end{crl}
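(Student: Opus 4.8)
The plan is to obtain Corollary \ref{crl_p} as an immediate specialization of Theorem \ref{thm_comp}, choosing the outer function to be the power map. Specifically, I would set $\varphi(t) = t^p$ on the open interval $I = (0,\infty)$. Since the hypothesis guarantees $g(x) > 0$ for every $x \in X$, we have $g(X) \subseteq (0,\infty) = I$, so the composition $f(x) = \varphi(g(x)) = g(x)^p$ is well-defined and fits the template of Theorem \ref{thm_comp}.

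The only work is to verify that this $\varphi$ satisfies the three hypotheses of Theorem \ref{thm_comp}. First, $\varphi$ is continuously differentiable on $(0,\infty)$ with $\varphi'(t) = p\, t^{p-1}$. Second, $\varphi$ is monotonically increasing with strictly positive derivative, since $p > 0$ and $t^{p-1} > 0$ give $\varphi'(t) > 0$ for all $t > 0$. Third, $\varphi$ is concave: computing $\varphi''(t) = p(p-1)\, t^{p-2}$ and using $0 < p < 1$ (so that $p - 1 < 0$) yields $\varphi''(t) < 0$ on $(0,\infty)$.

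With the hypotheses checked, Theorem \ref{thm_comp} applies and $f$ is invex with kernel $\eta(x,y) = \frac{\varphi'(g(y))}{\varphi'(g(x))}(y-x)$. Substituting $\varphi'(t) = p\, t^{p-1}$ and cancelling the common factor $p$ gives $\eta(x,y) = (g(y)/g(x))^{p-1}(y-x)$, which is exactly the claimed kernel.

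I do not expect any real obstacle here: the statement is a direct corollary, and the entire argument reduces to the elementary observation that $t \mapsto t^p$ is increasing and concave on the positive reals when $0 < p < 1$. The only point requiring a moment's care is confirming the correct choice of domain $I = (0,\infty)$ and noting that the positivity hypothesis on $g$ guarantees that the range of $g$ lands inside $I$, so that the concavity and monotonicity of $\varphi$ are used only where they actually hold.
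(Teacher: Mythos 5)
Your proof is correct and matches the paper's intended argument exactly: Corollary \ref{crl_p} is stated as an immediate consequence of Theorem \ref{thm_comp} with $\varphi(t)=t^p$ on $(0,\infty)$, and your verification that $\varphi'(t)=pt^{p-1}>0$ and $\varphi''(t)=p(p-1)t^{p-2}<0$ is precisely the check the paper performs (explicitly only for the analogous Corollary \ref{crl_frac}). The cancellation of the factor $p$ in the kernel is also handled correctly.
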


\begin{crl}\label{crl_frac}
Let $f:C\to\r$ be a function defined by
\e{
f(x)=\frac{g(x)}{g(x)+c}
}
where $c>0$ and $g:C\to\r$ is convex and $g(x)>0$ for any $x\in C$. Then, $f$ is invex with a kernel function 
\e{
\eta(x,y)=\prn{\frac{g(x)+c}{g(y)+c}}^2(y-x).
}
\end{crl}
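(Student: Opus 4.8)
The plan is to recognize this corollary as a direct instance of the concave--convex composite Theorem \ref{thm_comp}. Writing $f(x)=\varphi(g(x))$ with the outer function $\varphi(t)\coloneqq t/(t+c)$, I would first fix the domain $I=(0,\infty)$, which is open and contains $g(X)$ since $g(x)>0$ for all $x\in X$ by assumption. It then remains only to verify that $\varphi$ satisfies the three hypotheses required by Theorem \ref{thm_comp}: continuous differentiability, strict monotonic increase, and concavity.

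All three are checked by elementary calculus. Differentiating gives $\varphi'(t)=c/(t+c)^2$, which is continuous and strictly positive on $I$ (since $c>0$ and $t+c>0$), so $\varphi$ is continuously differentiable and monotonically increasing. Differentiating once more yields $\varphi''(t)=-2c/(t+c)^3<0$ on $I$, so $\varphi$ is concave. With these verified, Theorem \ref{thm_comp} applies and yields invexity of $f$ together with the kernel $\eta(x,y)=(\varphi'(g(y))/\varphi'(g(x)))(y-x)$.

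The final step is to simplify this ratio into the claimed closed form. Substituting $\varphi'(t)=c/(t+c)^2$ gives
\e{
\frac{\varphi'(g(y))}{\varphi'(g(x))}=\frac{c/(g(y)+c)^2}{c/(g(x)+c)^2}=\prn{\frac{g(x)+c}{g(y)+c}}^2,
}
so that $\eta(x,y)=\prn{(g(x)+c)/(g(y)+c)}^2(y-x)$, exactly as stated.

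I do not expect any genuine obstacle here, since the result is essentially a specialization of Theorem \ref{thm_comp}; the only mild care is in confirming that $\varphi(t)=1-c/(t+c)$ is concave rather than convex (the sign of $\varphi''$) and that positivity of $g$ guarantees the composite stays in the chosen domain $I$. As a remark, since the resulting kernel has the form $\alpha(x,y)(y-x)$ with $\alpha(x,y)=\prn{(g(x)+c)/(g(y)+c)}^2\ge0$, Theorem \ref{thm_inv_pse} immediately upgrades invexity to pseudoconvexity.
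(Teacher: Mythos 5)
Your proposal is correct and follows exactly the paper's own route: it specializes Theorem \ref{thm_comp} with $\varphi(t)=t/(t+c)$, verifies $\varphi'(t)=c/(t+c)^2>0$ and $\varphi''(t)=-2c/(t+c)^3<0$ on $t>0$, and simplifies the resulting kernel to the stated form. No gaps.
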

\begin{proof}
Set $\varphi(t)=t/(t+c)$ and we have $\varphi'(t)=c/(t+c)^2>0$ and $\varphi''(t)=-2c/(t+c)^3<0$ for any $t>0$ (hence $\varphi$ is concave).
\end{proof}

Kernel functions in Theorem \ref{thm_frac} and Corollary \ref{crl_log} have the same structure. Thus, due to Proposition \ref{prp_sum}, the invexity can be preserved by adding $\log h(x)$ and $g(x)/h(x)$, where $h$ is affine (convex and concave) and positive on $C$. The sum is again pseudoconvex due to Theorem \ref{thm_inv_pse}. This is not trivial since pseudoconvexity is generally not preserved under addition.

\begin{exm}\label{exm_frac_log}
Since $|x-1|/x$ and $\log x$ have a common kernel function $\eta(x,y)=(x/y)(y-x)$, $f(x)=|x-1|/x+\log x,\ x>0$ is invex (pseudoconvex) with the same kernel function.
\end{exm}

\subsection{Separable sums: non-pseudoconvex example 1}\label{sec_non_quasi}

As shown in Section \ref{sec_rel}, pseudoconvex functions are a subclass of quasiconvex functions, and thus, they always have convex sublevel sets. In contrast, invex functions can have nonconvex sublevel sets. We construct invex functions that are not pseudoconvex by a separable sum. Such examples are important in signal processing \cite{pinilla22,pinilla24}. A sum of pseudoconvex functions is not necessarily pseudoconvex, but the invexity is preserved if the sum is separable. The following result is also shown in \cite{pinilla24}.

\begin{thm}\label{thm_sep}
Let $f:\rn\to\r$ be the function defined by 
\e{
f(x)=\sum_{i=1}^n f_i(x_i).
}
If $f_i:\r\to\r$ is invex with a kernel function $\eta_i:\r\to\r$, then $f$ is invex with a kernel function
\e{
\eta(x,y)=[\eta_i(x_i,y_i)]_{i=1}^n
}
where $[a_i]_{i=1}^n\in\rn$ is the vector whose $i$-th component is $a_i\in\r$.
\end{thm}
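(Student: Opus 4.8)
The plan is to verify the defining inequality \eqref{invex_non} for $f$ with the proposed kernel, i.e.\ to show that $\ang{\xi,\eta(x,y)}\le f(y)-f(x)$ for every $x,y\in\rn$ and every $\xi\in\partial f(x)$. Since the proposed kernel acts coordinatewise, the inner product splits as $\ang{\xi,\eta(x,y)}=\sum_{i=1}^n\xi_i\eta_i(x_i,y_i)$, and the objective difference splits as $f(y)-f(x)=\sum_{i=1}^n(f_i(y_i)-f_i(x_i))$. The idea is therefore to bound each summand $\xi_i\eta_i(x_i,y_i)$ by $f_i(y_i)-f_i(x_i)$ using the one-dimensional invexity of $f_i$, and then to sum. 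For this to go through, the one genuinely substantive thing I must establish is that each component $\xi_i$ of a Clarke subgradient $\xi\in\partial f(x)$ actually lies in $\partial f_i(x_i)$.

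This decomposition of the subdifferential is the crux of the argument, and the hard part will be pinning it down cleanly. I would obtain it in either of two ways. The first is to write $f=\sum_{i=1}^n g_i$ with $g_i(x)\coloneqq f_i(x_i)$ and apply the Clarke sum rule $\partial f(x)\subseteq\sum_{i=1}^n\partial g_i(x)$ \cite[Proposition 2.3.3]{clarke90}, exactly as in the proof of Proposition \ref{prp_sum}; since $g_i$ depends only on the $i$-th coordinate, $\partial g_i(x)=\{0\}^{i-1}\times\partial f_i(x_i)\times\{0\}^{n-i}$, so the sum is the product $\partial f_1(x_1)\times\cdots\times\partial f_n(x_n)$ and any $\xi\in\partial f(x)$ satisfies $\xi_i\in\partial f_i(x_i)$. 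The second route argues directly from Definition \ref{dfn_cla}: a separable sum is differentiable at a point precisely when each $f_i$ is differentiable at the corresponding coordinate, and there $\nabla f(x^k)=[f_i'(x_i^k)]_{i=1}^n$; passing to a limit $\nabla f(x^k)\to\xi$ forces each component $\xi_i$ to be a limit of derivatives $f_i'(x_i^k)$ with $x_i^k\to x_i$, hence $\xi_i\in\partial f_i(x_i)$, and taking convex hulls preserves this componentwise membership because each $\partial f_i(x_i)$ is convex.

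With the decomposition $\xi=(\xi_1,\ldots,\xi_n)$, $\xi_i\in\partial f_i(x_i)$, in hand, the remainder is a routine summation: the invexity of each $f_i$ gives $\xi_i\eta_i(x_i,y_i)\le f_i(y_i)-f_i(x_i)$ for every $i$, and summing over $i$ yields $\ang{\xi,\eta(x,y)}\le f(y)-f(x)$, which is exactly \eqref{invex_non}. I expect no further estimates to be needed once the componentwise membership is secured, so the entire weight of the proof rests on the subdifferential inclusion $\partial f(x)\subseteq\partial f_1(x_1)\times\cdots\times\partial f_n(x_n)$. I would favor the sum-rule route for brevity, keeping the presentation parallel to Proposition \ref{prp_sum}.
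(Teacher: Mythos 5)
Your proof is correct and follows essentially the same route as the paper's: split the Clarke subdifferential of the separable sum componentwise, apply the one-dimensional invexity inequality $\xi_i\eta_i(x_i,y_i)\le f_i(y_i)-f_i(x_i)$ to each coordinate, and sum. The only difference is that the paper simply asserts $\partial f(x)=\{[\xi_i]_{i=1}^n\mid\xi_i\in\partial f_i(x_i)\}$ as following directly from Definition \ref{dfn_cla}, whereas you justify the (one-sided, and sufficient) inclusion $\partial f(x)\subseteq\partial f_1(x_1)\times\cdots\times\partial f_n(x_n)$ in detail via the sum rule or the definition; this extra care is harmless and fills in a step the paper leaves implicit.
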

\begin{proof}
$\partial f(x)=\{[\xi_i]_{i=1}^n \mid \xi_i\in\partial f_i(x_i)\}$ directly follows from Definition \ref{dfn_cla}. We obtain
\e{
f(y)-f(x)= \sum_{i=1}^n (f_i(y_i)-f_i(x_i)) \ge \sum_{i=1}^n \xi_i \eta_i(x_i,y_i) = \ang{\xi,\eta(x,y)}
}
for any $\xi=[\xi_i]_{i=1}^n\in\partial f(x)$ with $\xi_i\in\partial f_i(x)$.
\end{proof}

The following three examples are known as invex regularizers in signal processing \cite{pinilla22,pinilla24}. The invexity is proved by directly checking that every stationary point is globally optimal in \cite{pinilla22}. However, it can be proved in a simpler and more structured way by using Theorem \ref{thm_sep} and Corollaries \ref{crl_log}-\ref{crl_frac}.

\begin{exm}\label{ex_reg}
The following functions $f:\rn\to\r$ are invex:
\ite{
\item $f(x)=\sum_{i=1}^n\log(|x_i|+1)$ is invex with a kernel function
\e{
\eta(x,y)=\squ{\frac{1+|x_i|}{1+|y_i|}(y_i-x_i)}_{i=1}^n.
}
\item $f(x)=\sum_{i=1}^n(|x_i|+\epsilon)^p\ \ (0<p<1, \epsilon>0)$ is invex with a kernel function
\e{
\eta(x,y)=\squ{\prn{\frac{|y_i|+\epsilon}{|x_i|+\epsilon}}^{p-1}(y_i-x_i)}_{i=1}^n.
}
\item $f(x)=\sum_{i=1}^n\frac{|x_i|}{|x_i|+1}$ is invex with a kernel function
\e{
\eta(x,y)=\squ{\prn{\frac{|x_i|+1}{|y_i|+1}}^2(y_i-x_i)}_{i=1}^n.
}
}
\end{exm}

The graph of the first function in Example \ref{ex_reg} is shown in Figure \ref{fig_ex}(b). We can see that it has nonconvex sublevel sets, and hence it is not pseudoconvex. The graphs of the other two examples look similar and have nonconvex sublevel sets. They are used as regularizers to increase the sparsity of a solution to an optimization problem in signal processing \cite{pinilla22,pinilla24}.

By Theorem \ref{thm_inv_pse} and \ref{thm_sep}, a separable sum of pseudoconvex functions has a kernel function $\eta(x,y)=D(x,y)(y-x)$ where $D(x,y)$ is a positive semidefinite diagonal matrix.

For practical applications, it is important to establish invexity of sums of loss functions and regularizers; see \cite{pinilla22}. Extending the present kernel-based approach to such problems is left for future work.

\subsection{Convex functions with nonlinear transformations: non-pseudoconvex example 2}\label{sec_inv}

The following result, due to Craven \cite{craven81}, generates invex functions from convex functions. It is generalized to include nonsmooth functions.

\begin{thm}\label{thm_nonlinear}
Let $g:\rn\to\r$ be a convex function and $\Phi:X\to\rn$ be differentiable with the Jacobian $\nabla\Phi\in\r^{n\times n}$ nonsingular on $X$. Then $f:X\to\r$ defined by
\e{
f(x)=g(\Phi(x))
}
is invex with a kernel function
\e{
\eta(x,y)=(\nabla\Phi(x))^{-1}(\Phi(y)-\Phi(x)).
}
\end{thm}
\begin{proof}
Since $\partial f(x)=\nabla\Phi(x)^\top \partial g(x)$ by the chain rule of Clarke subdifferentials \cite[Theorem 2.3.9 (iii)]{clarke90}, we obtain
\e{\ald{
f(y)-f(x)
& = g(\Phi(y))-g(\Phi(x)) \ge \ang{\xi,\Phi(y)-\Phi(x)}\\
& = \ang{\nabla\Phi(x)^\top \xi,\nabla\Phi(x)^{-1}(\Phi(y)-\Phi(x))},
}}
for any $\xi\in\partial g(x)$.
\end{proof}

Although the transformation $\Phi$ can destroy the convexity, it preserves the invexity. That is the reason for the term ``invex (invariant convex)'' \cite{craven81}. This kind of construction of invex functions is generalized to the so-called $(h,F)$-convexity (see \cite{mishra08}).

We give an example, which is invex but whose sublevel sets are nonconvex (hence not pseudoconvex).


\begin{exm}[Rosenbrock's banana function (see \cite{naser25})]

Define $g(u_1,u_2)=u_1^2+bu_2^2$ and $\Phi(x_1,x_2)=(a-x_1,x_2-x_1^2)^\top$ with $a\in\r$ and $b>0$. Then, we obtain Rosenbrock's banana function 
\e{
f(x_1,y_1)=(a-x_1)^2+b(x_2-x_1^2)^2,
}
which is a well-known nonconvex benchmark test function in optimization. Here, $g$ is convex and
\e{
\nabla \Phi(x_1,x_2)=
\begin{pmatrix}
-1 & 0\\
-2x_1 & 1
\end{pmatrix},\quad
\det(\nabla \Phi(x_1,x_2))=-1,
}
the Jacobian matrix $\nabla \Phi(x_1,x_2)$ is nonsingular for every $(x_1,x_2)\in \mathbb{R}^2$. Hence $f$ satisfies the assumptions of Theorem \ref{thm_nonlinear}, and therefore $f$ is invex. It is not quasiconvex since for $(a,b)=(1,100)$, $f(0,0)=f(2,4)=1$, but for $(1,2)=((0,0)+(2,4))/2$, we have $f(1,2)=100$.

Moreover,
\e{
\nabla f(x_1,x_2) =
\left(
-2(a-x_1)-4bx_1(x_2-x_1^2),
2b(x_2-x_1^2)
\right)^\top.
}
If $\nabla f(x_1,x_2)=0$, then the second component gives $x_2-x_1^2=0$. Substituting this into the first component, we obtain $x_1=a$. It then follows that $x_2=a^2$. Therefore, $(a,a^2)$ is the unique stationary point (and global minimizer) of $f$.

Due to Theorem \ref{thm_nonlinear}, its kernel function is
\e{\ald{
\eta(x,y)
&=\left(
y_1-x_1,y_2-x_2-(y_1-x_1)^2
\right)^\top\\
&=\begin{pmatrix}
1 & 0\\
x_1-y_1 & 1
\end{pmatrix}
(y-x).
}}
Thus, the kernel function can be written as $\eta(x,y)=A(x,y)(y-x)$. Also, by suitably choosing $g$ and $\Phi$, we can create other smooth and nonsmooth invex test functions in optimization.
\end{exm}



\section{Conclusion}

We presented explicit construction methods for kernel functions together with concrete examples of invex functions. We also clarified the relationships among nonsmooth invex, pseudoconvex, and quasiconvex functions, and established a characterization of pseudoconvexity in terms of kernel functions. These results provide a constructive framework for analyzing invex functions and are particularly useful for nonsmooth, non-pseudoconvex examples arising in applications.

Future work includes developing a more systematic theory of invex functions based on kernel constructions and identifying tractable subclasses. In particular, it is of interest to study invex functions generated by the sum of convex functions and regularizers, as well as extensions to non-Lipschitz functions such as $\ell_p$-pseudonorms ($0<p<1$). Another interesting direction is to classify invex functions through the structure of their kernel functions, for example, functions admitting kernels of the form $A(x,y)(y-x)$ with structured matrices $A(x,y)$, such as the diagonal, triangular, or positive semidefinite cases appearing in Sections \ref{sec_non_quasi} and \ref{sec_inv}.

\section*{Acknowledgement}

The author is grateful to Mr.~Keiya Sakabe at LMU Munich for helpful discussions. The work of the author is partially supported by JSPS KAKENHI JP25KJ0120. 

\section*{Declarations}

The author has no conflict of interest.

\section*{Data availability statement}

This study did not use any datasets.

\begin{appendices}

\section{Proofs of propositions in Section \ref{sec_pre}}\label{a_proof}

\begin{proof}[Proof of Proposition \ref{prp_invex_non}]
First, assume $f$ is invex. If $x^*$ is a stationary point, then by \eqref{invex_non} with $\xi=0$, we obtain $f(x)\ge f(x^*)$ for any $x\in X$. Next, assume that every stationary point is a global minimizer. Consider
\e{\label{non_min}
\xi_x\coloneqq\underset{\xi\in\partial f(x)}{\mathrm{arg\,min}}\,\|\xi\|^2.
}
Such $\xi_x$ exists since $\partial f(x)$ is compact, and $\|\xi_x\|\neq0$ for any non-stationary $x$. Moreover, we have
\e{\label{non_opt}
\ang{\xi_x,\xi}\ge\|\xi_x\|^2,\ \ \forall \xi\in\partial f(x)
}
by the necessary optimality condition $\ang{2\xi_x,\xi_x-\xi}\ge0,\ \forall\xi\in\partial f(x)$ of the minimization problem \eqref{non_min}. We define $\eta$ by
\e{\label{eta_non}
\eta(x,y)\coloneqq
\begin{cases}
0 & \mathrm{if}\ 0\in\partial f(x),\\
\frac{-|f(y)-f(x)|\xi_x}{\|\xi_x\|^2} & \mathrm{if}\ 0\notin\partial f(x).
\end{cases}
}
Then, \eqref{invex_non} holds if $x$ is stationary. If $x$ is not stationary, we obtain 
\e{
f(y)-f(x) \ge-|f(y)-f(x)| \ge-|f(y)-f(x)|\frac{\ang{\xi_x,\xi}}{\|\xi_x\|^2}=\ang{\xi,\eta(x,y)}
}
for any $\xi\in\partial f(x)$ by \eqref{non_opt}. Thus, $f$ is invex with respect to $\eta$ defined above.
\end{proof}

\begin{proof}[Proof of Proposition \ref{prp_sum}]
We have
\e{\ald{
(\alpha f(y)+\beta g(y))-(\alpha f(x)+\beta g(x))
&\ge \alpha\ang{\xi_f,\eta(x,y)}+\beta\ang{\xi_g,\eta(x,y)}\\
&=\ang{\alpha\xi_f+\beta\xi_g,\eta(x,y)}
}}
for any $\xi_f\in\partial f(x)$ and $\xi_g\in\partial g(x)$. Since $\partial(\alpha f+\beta g)(x)\subseteq\alpha\partial f(x)+\beta\partial g(x)$ \cite[Proposition 2.3.3]{clarke90}, we conclude that $\alpha f+\beta g$ is invex.
\end{proof}

\begin{proof}[Proof of Proposition \ref{prp_const}]
Let $x^*\in X$ satisfy the KKT conditions \eqref{kkt}. Then, there exist $\xi_0^*\in\partial f(x^*)$, $\xi_i^*\in\partial g_i(x^*)$, and $\lambda_i^*\ge0$ such that $\xi_0^*=-\sum_{i=1}^m\lambda_i^*\xi_i^*$. By the invexity of $f$ and $g_i$ and $\lambda_i^*g_i(x^*)=0$, we obtain
\e{\ald{
f(x)-f(x^*)
&\ge\ang{\xi_0^*,\eta(x^*,x)}=-\sum_{i=1}^m\lambda_i^*\ang{\xi_i^*,\eta(x^*,x)}\\
&\ge -\sum_{i=1}^m\lambda_i^*(g_i(x)-g_i(x^*))=-\sum_{i=1}^m\lambda_i^*g_i(x)\ge 0
}}
for any $x\in X$ satisfying $g_i(x)\le 0$ for all $i$.
\end{proof}

\end{appendices}

\bibliography{ref}%

\end{document}